\newcommand{\C}{{\mathbf C}}                   
\newcommand{\Z}{{\mathbf Z}}                   
\newcommand{\A}{{\mathbf A}^1}                   
\renewcommand{\P}{\mathbf{P}^1}                
\renewcommand{\d}{\mbox{d}}                      
\newcommand{\Mod}{{\mathcal M}}
\DeclareMathOperator{\res}{res} 
\newtheorem{prop}{Proposition}[]
\newtheorem{prop2}{Proposition}[]
\newtheorem{rk}[prop]{Remark}
\newtheorem{lem}[prop]{Lemma}
\newtheorem{cor}[prop2]{Corollary}
\newtheorem{thm}[prop]{Theorem}
\title[The dimension of Garnier equations]
{The dimension of the space of Garnier equations with fixed locus of apparent singularities}
\author[Szil\'ard Szab\'o]{Szil\'ard Szab\'o} 
\thanks{
  Department of Geometry, 
  Budapest University of Technology and Economics, 
  Egry J. u. 1/H,
  Budapest 1111, Hungary,
  \texttt{szabosz@math.bme.hu}
\newline
AMS Classification: 34M03, 34M35. Keywords: second-order linear ordinary differential equation, 
regular singularity, apparent singularity
}
\date{\today}
\begin{document}

\begin{abstract}
We show that the conditions imposed on a second order linear differential equation 
with rational coefficients on the complex line by requiring it to have regular singularities 
with fixed exponents at the points of a finite set $P$ and apparent singularities 
at a finite set $Q$ (disjoint from $P$) determine a linear system of maximal rank. 
In addition, we show that certain auxiliary parameters can also be fixed. 
This enables us to conclude that the family of such differential equations is of the expected dimension
and to define a birational map between an open subset of the moduli space of logarithmic connections 
with fixed logarithmic points and regular semi-simple residues and the Hilbert scheme of points on 
a quasi-projective surface.
\end{abstract}

\maketitle

\section{Introduction}
The fact that some singularities of scalar linear differential equations with meromorphic coefficients on a 
complex domain are "inessential" was already observed by Weierstrass. 
In recent times the study of these apparent singularities of linear differential equations has attracted 
growing interest \cite{vdps}, \cite{iis}, \cite{dubma}, \cite{ss}. 
A basic question in this field is to determine the dimension of the space of 
Fuchsian differential equations having fixed exponents at a fixed set $P$ and apparent singularities 
at a disjoint finite set $Q$ with prescribed ``multiplicities'' (or weights) of the points. 
The formal dimension count is easy to do and is carried out for example in Remark 6.3 of \cite{vdps}. 
However, it is also stated there that it is not yet known whether the conditions imposed on the parameters 
are independent from each other (except for one obvious linear relation between them). 
In this article, we first give an affirmative answer to this question for differential equations of order $2$ 
assuming the total weight of the apparent singularities to be the expected value (i.e. half the dimension of the moduli space). 

Of course, in case the total weight of $Q$ is smaller than $n-2$ the result still holds, hence showing that the 
dimension of the corresponding space of linear differential equations is the same as for the total weight 
$n-2$ case. If however $Q$ is of total weight greater than $n-2$ then the linear system to solve is overdetermined. 
It turns out that in this case the existence of such connections imposes quadratic constraints on the natural 
additional parameters of such an equation. 

As a consequence, in Corollary \ref{cor} we show that for any finite set $Q$ of arbitrary total weight 
which is disjoint from $P$ the dimension of the space of second-order linear differential equations 
having fixed eigenvalues of its residues at $P$ and apparent singularities of 
prescribed weights at the points of $Q$ is $n-2$. 

The relevance of this result lies in the study of the fibration of an open subset of the corresponding 
moduli space of stable parabolic logarithmic connections over an open subset of the space $\mathbf{CP}^{n-2}$ of all 
positions $Q$ of apparent singularities. Namely, it implies that all the fibers of this fibration are 
of dimension $n-2$ (which is moreover equal to half the dimension of the whole moduli space). 
We explain this geometric interpretation in Corollary \ref{cor:geomint} and Section 
\ref{sec:geomint}. 

We are aware that a similar result is also proved in Section 3.4.3. of \cite{iksy}; 
that proof however uses methods only adapted to the second-order case. 
Indeed, in the terminology of Section \ref{sec:cor} below, their proof heavily relies on the identity $N=n-2$ 
since they consider an invertible matrix of size $N\times (n-2)$. However, this identity can only be 
expected to hold for second-order equations (and even there only generically in the moduli space). 
Furthermore, the work \cite{dubma} contains computations in direction of the general case of Fuchsian 
equations of arbitrary order, however mostly in the weight $1$ case (called special in {\it loc. cit.})
Therefore, our primary aim here is to give a simple and self-contained proof of this statement 
covering the higher-weight case too and which lends itself to direct (although far from obvious) 
generalisation to higher-order equations; we will adress this generalisation in future work. 

During the preparation of this work the author benefited from support of Lend\"ulet project 
and OTKA grant NK 81203. 

\section{Statement of the result}
Denote the Riemann sphere by $\P$, let $\A\subset\P$ be an affine patch and $z$ be a coordinate on $\A$, 
and set $\P\setminus\A=\infty$. 
Fix $n\geq 2$ and a finite subset $P=\{t_0,t_1,\ldots,t_n\}$ of $\P$. We will assume $t_0=\infty$ and 
$t_{i_1}\neq t_{i_2}$ for $i_1\neq i_2$. 
Furthermore, fix a set $Q=\{q_1,\ldots,q_N\}\subset\P$ and for each 
$j\in\{1,\ldots,N\}$ a positive integer $w_j$ (the \emph{weight} of $q_j$) 
satisfying 
\begin{equation}\label{sumw}
        \sum_{j=1}^N w_j = n-2. 
\end{equation}
The sum on the left-hand side is called the \emph{total weight} $w(Q)$ of $Q$. 
Again, we assume that the points $q_j$ are pairwise different from each other; 
however, in general we do not assume the sets $P$ and $Q$ to be disjoint. Set 
\begin{equation}
	\psi(z)=\prod_{i=1}^n(z-t_i)\prod_{j=1}^N(z-q_j)
\end{equation}
and consider a second order linear differential equation with rational coefficients 
\begin{equation}\label{eq}
	w''(z) + \frac{G(z)}{\psi(z)}w'(z) + \frac{H(z)}{\psi(z)^2}w(z) = 0 
\end{equation}
for the holomorphic function $w(z)$ and its first and second holomorphic differentials $w'$ and $w''$, 
where $G$ and $H$ are polynomials of degree $2n-3$ and $4n-6$ respectively: 
\begin{align}
	G(z) &= G_0 + G_1 z + \cdots + G_{2n-3} z^{2n-3} \label{g}\\
	H(z) &= H_0 + H_1 z + \cdots + H_{4n-6} z^{4n-6}. \label{h}
\end{align}
It is well-known that such an equation has regular singularities at $P\cup Q$. 
For all $i\in\{1,\ldots,n\}$ set 
\begin{align}
	G^{t_i}_0 &= \res\left(\frac{G(z)}{\psi(z)},z=t_i\right) 
		= \lim_{z\to t_i}\left((z-t_i)\frac{G(z)}{\psi(z)}\right),\label{gi}\\
	H^{t_i}_0 &= \lim_{z\to t_i}\left((z-t_i)^2\frac{H(z)}{\psi(z)^2}\right)\label{hi}
\end{align}
for the lowest non-necessarily zero coefficients of the Laurent-series near $t_i$ of the coefficients 
in (\ref{eq}) of $w'$ and $w$ respectively. In a similar vein, we set 
\begin{equation}\label{g0h0}
	G^{t_0}_0 = G_{2n-3}, H^{t_0}_0 = H_{4n-6}
\end{equation}
for the lowest non-necessarily zero coefficients of the Laurent-series near $t_0$ of the same coefficients. 
For $i\in\{0,\ldots,n\}$ the exponents of (\ref{eq}) at $t_i$ are defined as the roots $\rho^i_1,\rho^i_2$ of 
the indicial equation 
$$
	\rho(\rho-1) + G^{t_i}_0 \rho + H^{t_i}_0=0; 
$$
then the local monodromy of (\ref{eq}) over a loop going around $t_i$ once in the positive 
direction and having no other points of $P\cup Q$ in its interior has eigenvalues 
$\exp(2\sqrt{-1}\pi\rho^i_1),\exp(2\sqrt{-1}\pi\rho^i_2)$. 
For any $i\in\{0,\ldots,n\}$, fix $\rho^i_1,\rho^i_2\in\C$ in such a way that 
the following two conditions hold: 
\begin{enumerate}
\item $\rho^i_1-\rho^i_2\notin\Z$, \label{condgeni}
\item for any choice of $\varepsilon_i\in\{1,2 \}$ for all $i\in\{0,\ldots,n\}$ 
the sum $\sum_{i=0}^n\rho^i_{\varepsilon_i}$ is not an integer. \label{condgenii}
\end{enumerate}
We will refer to these conditions as \emph{genericity} of the eigenvalues. 
Condition (\ref{condgeni}) implies that the local monodromy about $t_i$ 
is regular semisimple. Condition (\ref{condgenii}) implies that a logarithmic 
connection with eigenvalues $\rho^i_1,\rho^i_2$ at $t_i$ has no rank $1$ 
sub-bundle invariant by the connection; indeed, the residue of such a subbundle 
at $t_i$ would have to be one of $\rho^i_1,\rho^i_2$, and the degree of the line 
bundle would have to be equal to minus the sum of these eigenvalues $\rho^i_{\varepsilon_i}$. 
Clearly, fixing $\rho^i_1,\rho^i_2$ is equivalent to fixing 
\begin{align}
	G^{t_i}_0 &= 1-(\rho^i_1+\rho^i_2),\label{condgi}\\
	H^{t_i}_0 &= \rho^i_1\rho^i_2.\label{condhi}
\end{align}
Similarly to (\ref{gi},\ref{hi}), for any $j\in\{1,\ldots,N\}$ we set 
\begin{align}
	G^{q_j}_0 &= \res\left(\frac{G(z)}{\psi(z)},z=q_j\right) 
		= \lim_{z\to q_j}\left((z-q_j)\frac{G(z)}{\psi(z)}\right),\label{gj}\\
	H^{q_j}_0 &= \lim_{z\to q_j}\left((z-q_j)^2\frac{H(z)}{\psi(z)^2}\right)\label{hj}. 
\end{align}
A point $q\notin P$ is called an apparent singularity of weight $w$ of (\ref{eq}) 
if near $q$ a fundamental system consisting of regular functions $w_1(z),w_2(z)$ 
can be found such that $w_1(q)\neq 0,w_2(z)=(z-q)^{w+1}h(z)$ with some holomorphic 
function $h(z)$ satisfying $h(q)\neq 0$. 
Recall that a necessary condition for $q_j$ to be an apparent singularity of weight $w$ 
is that the roots of the indicial equation 
$$
	\rho(\rho-1) + G^{q_j}_0 \rho + H^{q_j}_0=0 
$$
be $0,w+1$, i.e. that $G^{q_j}_0=-w,H^{q_j}_0=0$. 
By Fuchs' relation and (\ref{sumw}), we must then have 
\begin{equation}\label{Fuchs}
	\sum_{i=0}^n (\rho^i_1+\rho^i_2) = 1. 
\end{equation}
However, fixing the exponents is not sufficient for the singularity to be apparent: we also need to prescribe 
that the solution $w_1$ corresponding to the exponent $0$ do not contain a logarithmic term of the form 
$$
	C^{q_j}w_2(z)\ln(z-q_j) 
$$
where $w_2$ is a non-zero particular solution corresponding to the exponent $w+1$, 
i.e. we have to make sure that in this formula $C^{q_j}=0$. These add up to the three conditions 
\begin{align}
	G^{q_j}_0 &= -1,\label{condgj}\\
	H^{q_j}_0 &= 0,\label{condhj}\\
	C^{q_j} &= 0 \label{condcj}
\end{align}
which are necessary and sufficient for $q_j$ to be an apparent singularity of weight $w$. 

Notice that in total we have $6n-7$ indeterminates 
$$
	G_0,\ldots,G_{2n-3},H_0,\ldots,H_{4n-6}, 
$$
and so far we have written down $2(n+1)+3N$ conditions on them. 
In the case where all weights are equal to $1$ this latter number is $5n-4$, and 
in \cite{dubma} it is shown that symplectic conjugate coordinates associated to the 
variables $q_1,\ldots,q_{n-2}$ are (up to some constants) given by the values of 
\begin{equation}\label{hj1}
	H^{q_j}_1 = \lim_{z\to q_j}\left((z-q_j)\frac{H(z)}{\psi(z)^2}\right)
\end{equation}
(notice that this makes sense because of (\ref{condhj})). 
Hence, it is natural to impose that these coordinates take some prescribed values $p_j$ as well: 
\begin{equation}\label{condpj}
	H^{q_j}_1 = p_j. 
\end{equation}

Equations (\ref{condpj}) for $j\in\{1,\ldots,n-2\}$ with the previous ones add up to 
a total of $6n-6$ conditions in $6n-7$ indeterminates, so in principle this system 
could be overdetermined; however, the relation (\ref{Fuchs}) means that at least one 
of the conditions is redundant. 

In the case where some $w_j\geq 2$ we slightly have to modify the statements of the 
previous paragraph: the quantities (\ref{hj1}) still constitute some of the dual 
coordinates, but others have to be included too. Analogously to (\ref{hj},\ref{hj1}) 
define $H^{q_j}_l$ to be the $l$'th term of the Taylor-expansion of $(z-q_j)^2H/\psi^2$ 
at $q_j$. 

The first result of this paper is 
\begin{thm}\label{thm}
Conditions (\ref{condgi}, \ref{condhi}) for $i\in\{0,\ldots,n\}$ and 
(\ref{condgj},\ref{condhj},\ref{condcj},\ref{condpj}) for $j\in\{1,\ldots,N\}$ form 
a system of independent conditions except for a linear combination of 
(\ref{condgi},\ref{condgj}) implied by (\ref{Fuchs}). 
Furthermore, in the case $q_j\notin P$ for all $j$ if one fixes in addition the quantities 
$H^{q_j}_2,\ldots,H^{q_j}_{w_j}$ arbitrarily for all $j\in\{1,\ldots,N\}$ then (assuming (\ref{Fuchs})) 
there exists a unique set of coefficients $G_0,\ldots,G_{2n-3},H_0,\ldots,H_{4n-6}$ satisfying the 
above conditions. 
Finally, in the case where some $q_j=t_i\in P$, the same statement holds with
$H^{q_j}_{2w_j + 1},\ldots,H^{q_j}_{3w_j}$ fixed instead of (\ref{condpj}). 
\end{thm}

This theorem will be shown in Section \ref{sec:thm}.
As a consequence of the theorem, in Section \ref{sec:cor} we will also show 
\begin{cor}\label{cor}
For any finite multiset $Q\subset \C\setminus P$ of arbitrary total weight the space of second-order 
linear differential equations whose eigenvalues of residues at $t_i$ are $\rho^i_1,\rho^i_2$ and that have 
apparent singularities of weight $w_j$ at $q_j\in Q$ is either empty or an
affine space of dimension $n-2$. 
\end{cor}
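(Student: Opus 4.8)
The plan is to fibre the space $V_Q$ over the values of the symplectic coordinates \eqref{hj1} and to recognise the fibres as Hermite interpolation problems, thereby reducing the corollary to Theorem~\ref{thm}. Write $m=|Q|$ and set $\psi_m(z)=\prod_{i=1}^n(z-t_i)\prod_{j=1}^m(z-q_j)$. An element of $V_Q$ is an equation of the form \eqref{eq} (with $\psi_m$ in place of $\psi$), where $G$ has degree at most $n+m-1$ and $H$ degree at most $2(n+m-1)$ — these bounds being forced by requiring regular singularities at $P\cup Q$ and at $\infty$ — subject to the analogues of \eqref{condgi}, \eqref{condhi}, \eqref{condgj}, \eqref{condhj}, \eqref{condcj} but \emph{not} of \eqref{condpj}. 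Exactly as in the proof of Theorem~\ref{thm}, the conditions \eqref{condgi} and \eqref{condgj} prescribe all residues of the proper rational function $G(z)/\psi_m(z)$ and hence determine $G$ uniquely, the only relation among them being \eqref{Fuchs} (here in the form $\sum_{i=0}^n(\rho^i_1+\rho^i_2)=n-m-1$, which we assume, since otherwise $V_Q$ is empty); fix this $G$ once and for all.

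It remains to describe the admissible $H$. Introduce the map $\pi\colon V_Q\to\C^m$ sending an equation to the tuple $(H^{q_1}_1,\dots,H^{q_m}_1)$. The local computation at $q_j$ carried out in the proof of Theorem~\ref{thm} expresses, once \eqref{condgj} and \eqref{condhj} hold, the obstruction $C^{q_j}$ as $H^{q_j}_2+H^{q_j}_1(\gamma_j+H^{q_j}_1)$ with $\gamma_j$ depending only on $G$; in particular the coefficient of $H^{q_j}_2$ is nonzero. Therefore, on any fibre of $\pi$ the conditions cutting out $V_Q$ become a \emph{linear} system on $H$: they prescribe the value of $H$ at each of $t_1,\dots,t_n$, the leading coefficient of $H$, and the full $2$-jet of $H$ at each $q_j$ (the value being $0$, the first derivative fixed by the chosen $H^{q_j}_1$ linearly, the second by \eqref{condcj}). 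This is a Hermite interpolation problem with $n+1+3m$ conditions on $\C[z]_{\le 2n+2m-2}$, a space of dimension $2n+2m-1$.

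When $m\le n-2$ this problem is under- or exactly determined, and its functionals are linearly independent: the common kernel of the $n+3m$ Hermite functionals at the distinct points $t_i,q_j$ consists of the multiples of $\prod_i(z-t_i)\prod_j(z-q_j)^3$, on which the leading-coefficient functional does not vanish. Hence every fibre of $\pi$ is a nonempty affine space of dimension $(2n+2m-1)-(n+1+3m)=n-m-2$, the map $\pi$ is surjective, and $\dim V_Q=m+(n-m-2)=n-2$ — the case $m=n-2$ being Theorem~\ref{thm}, with each fibre a single point. When $m\ge n-2$ the problem is overdetermined; for arbitrary distinct nodes its $n+1+3m$ functionals have rank $2n+2m-1$, so the fibre over $(H^{q_1}_1,\dots,H^{q_m}_1)$ is nonempty if and only if these parameters satisfy $m-n+2$ linear conditions on the interpolation data, which, as $H'(q_j)$ depends linearly and $H''(q_j)$ quadratically on $H^{q_j}_1$, turn into $m-n+2$ \emph{quadratic} equations in $(H^{q_1}_1,\dots,H^{q_m}_1)$ — the quadratic constraints announced in the introduction. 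Over each point of their common zero locus the fibre is a single point (the homogeneous interpolation problem admits only $H=0$), so $\dim V_Q$ equals the dimension of this locus in $\C^m$.

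The main obstacle is thus to prove that this locus has the expected dimension $n-2$, i.e.\ that the $m-n+2$ quadrics form a regular sequence (otherwise $\dim V_Q$ would be larger). Their quadratic parts are diagonal — the coefficient of $(H^{q_j}_1)^2$ in the $k$-th quadric equals $-2\bigl(\psi_m(z)/(z-q_j)\bigr)^2\big|_{z=q_j}$ times the value, on the coordinate recording $H''(q_j)$, of the $k$-th functional spanning the cokernel of the Hermite map — so the assertion reduces to maximality of the rank of the resulting $(m-n+2)\times m$ matrix. Establishing this, via an explicit identification of that cokernel, is the remaining work; once it is in hand one concludes $\dim V_Q=n-2$ for every finite $Q\subset\C\setminus P$, the under- and exactly determined ranges $m\le n-2$ (including the geometrically relevant case $m=n-2$) being immediate from the preceding paragraph.
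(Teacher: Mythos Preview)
Your approach coincides with the paper's: fix $G$ first, fibre the remaining problem over the momenta $p_j=H^{q_j}_1$, recognise each fibre as a Hermite interpolation problem in $H$, and in the overdetermined range $m>n-2$ obtain $m-n+2$ quadratic constraints on $(p_1,\dots,p_m)$. The case $m\le n-2$ is complete and matches Section~\ref{sec:cor}.

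The genuine gap is in the overdetermined case. You correctly reduce everything to the maximal rank of the $(m-n+2)\times m$ matrix of $p_j^2$-coefficients, but then write that ``establishing this, via an explicit identification of that cokernel, is the remaining work'' and stop. The paper carries out exactly that identification, and it is short: order the rows so that the first $2n+2m-1$ are the leading-coefficient row, the $n$ rows at the $t_i$, all $m$ value rows and all $m$ first-derivative rows at the $q_j$, and the second-derivative rows for $j\le n-2$ only. The excess rows are then the second-derivative rows for $j\in\{n-1,\dots,m\}$, and the $j$-th cokernel functional is ``that row minus its unique expression in the first $2n+2m-1$ rows''. The resulting $j$-th quadric carries the term $\delta_j p_j^2$ from its own right-hand side \eqref{condcj2} (with $\delta_j\neq 0$), but no term $p_l^2$ for any other $l\in\{n-1,\dots,m\}$, since those second-derivative rows are absent from the first block and their right-hand sides never enter the linear combination. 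Hence the $(m-n+2)\times(m-n+2)$ block of your matrix on columns $\{n-1,\dots,m\}$ is diagonal with nonzero entries, which gives the rank you need. This is the step you declared open; once it is inserted your argument and the paper's are the same.
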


Finally, in Section \ref{sec:geomint} we deduce the following geometric consequence. 
Let 
$$
        \sigma: \widetilde{K_{\P}(P)} \to K_{\P}(P)
$$
be the blow-up of the total space of the line bundle $K_{\P}(P)$ at the points 
corresponding to $(z-t_i)^{-1}\rho^i_1,(z-t_i)^{-1}\rho^i_2\in T^*_{t_i}\P$. 
Furthermore, let us denote by $F_{t_i}$ the proper transform of $T^*_{t_i}\P$ 
with respect to $\sigma$ and set 
$$
        X = \widetilde{K_{\P}(P)} \setminus \cup_{i=0}^n F_{t_i}. 
$$
Clearly, $X$ inherits from $K$ a projection to $\P$, and we continue to denote the 
image of this map by $q$. 
Consider the Hilbert scheme $X^{[n-2]}$ of $n-2$ points on $X$; it is known to be a smooth 
variety \cite[Chapter 1]{nak}. 

On the other hand, let $\Mod$ be the moduli space of parabolically stable logarithmic connections 
of rank $2$ and degree $-1$ with given singular set $P$ and given generic eigenvalues of the residue 
$\rho^i_1,\rho^i_2$ (satisfying (\ref{Fuchs})) and $\Mod^0$ be the open subset consisting of 
connections whose underlying holomorphic vector bundle isomorphic to 
$\mathcal{O} \oplus \mathcal{O}(-1)$. 

\begin{cor}\label{cor:geomint}
There exists a birational map from $\Mod^0$ into $X^{[n-2]}$. 
In particular, there exists a rational map $p_1$ from $\Mod^0$ to the $(n-2)$-th symmetric product 
$S^{n-2}\P$ of $\P$.
\end{cor}

For more details, in particular for a description of the image of this map, see Section \ref{sec:geomint}. 

Notice that a similar (but less detailed) relationship between moduli spaces of Higgs bundles on a 
compact curve and the Hilbert scheme of points on the total space of the cotangent bundle of the curve 
appears in \cite[Section 7.3]{nak}. 
It is an intriguing question whether the map of Corollary \ref{cor:geomint} can be defined 
on a larger subset of $\Mod^0$ (possibly $\Mod^0$ itself) and if so whether its image can be identified. 
Unfortunately, for the moment we haven't got a sufficiently deep understanding of elementary 
transformations of logarithmic connections at the punctures to be able to answer this question. 
We actually suspect that the map extends to all of $\Mod$ to yield a birational map to $X^{[n-2]}$. 
If this is the case, it would be interesting to understand its image. 
Notice also that the special case $n=3$ (Painlev\'e VI) is treated for example in \cite{iis}; 
in that case $n-2=1$ and $\Mod=X^{[1]}=X$. 
On the other hand, in \cite{ls} the case $n=4$ will be studied; in that case one 
has to deal with $X^{[2]}$ which is known to be the blow-up of $S^2(X)$ along the diagonal $\Delta$. 
In particular, the component of $\pi^{-1}(q,p)$ of maximal dimension is just $1$-dimensional; 
the open subset we use is in fact easily seen to be isomorphic to $\C$, and in $\pi^{-1}(q,p)$ 
one has an additional point that compactifies this $\C$ into a $\P$.

\section{The total weight $n-2$ case}\label{sec:thm}
For the sake of simplicity, we will only perform the detailed computations in the weight 
$1$ case (i.e. all $w_j=1$ for $j\in\{1,\ldots,n-2\}$) and content ourselves by providing 
the result of the same arguments in the general case. 
Clearly, conditions (\ref{condgi},\ref{condhi},\ref{condgj},\ref{condhj}) for $i\geq 1$ 
in terms of the indeterminates 
$$
	G_0,\ldots,G_{2n-3},H_0,\ldots,H_{4n-6}
$$ 
read 
\begin{align}
	G_0 + G_1 t_i + \cdots + G_{2n-3} t_i^{2n-3} &= \alpha_i \label{condgi2}\\
	H_0 + H_1 t_i + \cdots + H_{4n-6} t_i^{4n-6} &= \beta_i \label{condhi2}\\
	G_0 + G_1 q_j + \cdots + G_{2n-3} q_j^{2n-3} &= \gamma_j \label{condgj2}\\
	H_0 + H_1 q_j + \cdots + H_{4n-6} q_j^{4n-6} &= 0 \label{condhj2}
\end{align}
for appropriate constants $\alpha_i,\beta_i,\gamma_j$ only depending on $P,Q,i,\rho^i_1,\rho^i_2$ or 
$P,Q,j$ respectively (that we do not make explicit). 
The number of conditions (\ref{condgi2},\ref{condgj2}) involving the 
coefficients $G_m$ is $(n+1)+(n-2)=2n-1$. We have $2n-1$ linear equations in $2n-2$ indeterminates. 
However, as we already mentioned, because of (\ref{Fuchs}) and (\ref{gj}) one of the equations 
(say the one for $G^{t_0}_0$) is redundant. 
There remains an equal number of equations as variables, and the matrix of the system is a 
Vandermonde matrix with parameters $t_1,\ldots,t_n,q_1,\ldots,q_{n-2}$. These parameters are 
pairwise different by assumption, so the coefficients $G_0,\ldots,G_{2n-3}$ are uniquely determined. 

We now come to equations (\ref{condpj}). We clearly have 
\begin{align*}
	H^{q_j}_1 &= \frac{\d}{\d z}\left((z-q_j)^2\frac{H(z)}{\psi(z)^2}\right)|_{z=q_j}\\
		&= \kappa_j H(q_j) + \mu_j \left(\frac{\d H(z)}{\d z}\right)|_{z=q_j} \\
		&= \mu_j \left(H_1 + 2 H_2 q_j + \cdots + (4n-6) H_{4n-6} q_j^{4n-7}\right)
\end{align*}
where the last equality holds because of (\ref{condhj2}), and where the constants 
\begin{align*}
	\kappa_j &= \frac{\d}{\d z}\left(\frac{(z-q_j)^2}{\psi(z)^2}\right)|_{z=q_j} \\
	\mu_j &= \frac{1}{\prod_{i=1}^n(q_j-t_i)^2\prod_{k\neq j}(q_j-q_k)^2} \neq 0
\end{align*}
only depend on $P,Q,j$. We infer that (\ref{condpj}) is equivalent to 
\begin{equation}\label{condpj2}
	H_1 + 2 H_2 q_j + \cdots + (4n-6) H_{4n-6} q_j^{4n-7} = \frac{p_j}{\mu_j}. 
\end{equation}

Finally, let us come to equations (\ref{condcj}). 
For this purpose, we first introduce the Laurent series near $q_j$
\begin{align}
	\frac{G(z)}{\psi(z)} &= -\frac{1}{z-q_j} + G^{q_j}_1 + G^{q_j}_2(z-q_j)+\cdots \\
	\frac{H(z)}{\psi(z)^2} &= \frac{H^{q_j}_1}{z-q_j} + H^{q_j}_2 + H^{q_j}_3(z-q_j)+\cdots \label{Laurenth}
\end{align}
where we have used (\ref{condgj},\ref{condhj},\ref{hj1}). 
\begin{lem}
The logarithm-freeness condition (\ref{condcj}) is equivalent in the weight $1$ case to 
$$
	\left(H^{q_j}_1 + G^{q_j}_1\right)H^{q_j}_1 + H^{q_j}_2 = 0
$$
and in the weight $2$ case to 
$$
   H^{q_j}_3 + 2(H^{q_j}_2 + 2 G^{q_j}_2)H^{q_j}_1 + \frac 1 2 (H^{q_j}_1 + 2 G^{q_j}_1)
   \left(H^{q_j}_2 + 2 (H^{q_j}_1 + G^{q_j}_1) H^{q_j}_1 \right) = 0. 
$$
\end{lem}
\begin{rk}\label{rk}
If the weight is $w>2$ then the explicit form of the relevant condition becomes 
more involved. What remains true however is that one obtains $H^{q_j}_{w+1}$ 
as a polynomial of the quantities $H^{q_j}_l$ and $G^{q_j}_l$ for $l\in \{2,\ldots,w\}$. 
In addition, the polynomial in question is weighted homogeneous if one assigns 
weight $l$ to $H^{q_j}_l$ and $G^{q_j}_l$. 
These facts can be easily proved by induction using the method of the proof below. 
\end{rk}
\begin{proof}
We are looking for the necessary and sufficient condition for the existence of an integral $w_1(z)$ in 
power series form near $q_j$
$$
	w_1(z) = a_0 + a_1(z-q_j) + a_2(z-q_j)^2+\cdots, 
$$
with $a_0\neq 0$. Writing down the consecutive terms of the expansion of the first two 
derivatives of $w_1$ and substituting them into (\ref{eq}) the lemma follows. 
\end{proof}

\subsection{Weight $1$ case ($q_j\notin P$)}
We can now write down explicitly conditions (\ref{condcj}). 
Indeed, as we have already seen, the coefficients $G_0,\ldots,G_{2n-3}$ are uniquely 
determined, hence so is $G^{q_j}_1$. 
On the other hand, by (\ref{condpj}) we have $H^{q_j}_1=p_j$. Hence, by the lemma we see that 
(\ref{condcj}) can be rewritten as 
\begin{equation*}
	H^{q_j}_2 = -p_j^2 - G^{q_j}_1 p_j. 
\end{equation*}
Now, just as above we have 
\begin{align*}
	2 H^{q_j}_2 &= \frac{\d^2}{\d z^2}\left((z-q_j)^2\frac{H(z)}{\psi(z)^2}\right)|_{z=q_j}\\
		&= 2\frac{\kappa_j}{\mu_j}p_j + \mu_j\left(\frac{\d^2 H(z)}{\d z^2}\right)|_{z=q_j}. 
\end{align*}
Comparing the last two formulae, we see that (\ref{condcj}) is equivalent to 
\begin{equation}\label{condcj2}
	2 H_2 + 6 H_3 q_j + \cdots + (4n-6) (4n-7) H_{4n-6} q_j^{4n-8} = \delta_j p_j^2 + \epsilon_j p_j
\end{equation}
for some $\delta_j\neq 0$ and $\epsilon_j\in\C$ only depending on $P,Q,j$. 
Now, the matrix of the system formed by the linear equation 
$$
	H_{4n-6} = \rho^0_1\rho^0_2 
$$
(which, by (\ref{g0h0}) is just (\ref{condhi}) for $i=0$) joint with 
(\ref{condhi2},\ref{condhj2},\ref{condpj2},\ref{condcj2}) in the indeterminates 
$H_0,\ldots,H_{4n-6}$ is the following confluent Vandermonde matrix 
$$
	\begin{pmatrix}
		0 & 0 & 0 & 0 & \cdots & 1\\
		1 & t_1 & t_1^2 & t_1^3 & \cdots & t_1^{4n-6} \\
		\vdots & \vdots & \vdots & \vdots & \ddots & \vdots \\
		1 & t_n & t_n^2 & t_n^3 & \cdots & t_n^{4n-6} \\
		1 & q_1 & q_1^2 & q_1^3 & \cdots & q_1^{4n-6} \\
		\vdots & \vdots & \vdots & \vdots & \ddots & \vdots \\
		1 & q_{n-2} & q_{n-2}^2 & q_{n-2}^3 & \cdots & q_{n-2}^{4n-6} \\
		0 & 1 & 2 q_1 & 3 q_1^2 & \cdots & (4n-6) q_1^{4n-7} \\
		\vdots & \vdots & \vdots & \vdots & \ddots & \vdots \\
		0 & 1 & 2 q_{n-2} & 3 q_{n-2}^2 & \cdots & (4n-6) q_{n-2}^{4n-7} \\
		0 & 0 & 2 & 6 q_1 & \cdots & (4n-6) (4n-7) q_1^{4n-8} \\ 
		\vdots & \vdots & \vdots & \vdots & \ddots & \vdots \\ 
		0 & 0 & 2 & 6 q_{n-2} & \cdots & (4n-6) (4n-7) q_{n-2}^{4n-8} 
	\end{pmatrix}.
$$
As it is well-known for instance in the theory of Hermite interpolation (and can be 
directly proved by differentiating 
a usual Vandermonde matrix), the determinant of this matrix is up to a nonzero constant equal to 
$$
	\prod_{1\leq i<k\leq n} (t_i-t_k)
	\prod_{i,j} (t_i-q_j)^3
	\prod_{1\leq j<l\leq n-2} (q_j-q_l)^9.
$$
As we assumed the points $t_1,\ldots,t_n,q_1,\ldots,q_{n-2}$ to be pairwise distinct the above expression 
is non-vanishing, and Theorem \ref{thm} is proved in the case where the weight of any 
$q_j$ is $1$ and $Q$ is disjoint from $P$. 

\subsection{Higher weight case ($q_j\notin P$)}
In the case where all $q_j\notin P$ but the weights of some of the $q_j$ are $w_j\geq 2$ 
with total weight $\sum_{j=1}^N w_j = n - 2$, in addition to $H^{q_j}_1$ the quantities 
$H^{q_j}_2,\ldots,H^{q_j}_{w_j}$ may also be chosen arbitrarily, however $H^{q_j}_{w+1}$ 
is fixed by these choices. 
Then, the matrix of the corresponding system is a confluent Vandermonde matrix in 
which the multiplicity of $q_j$ is equal to $w_j + 2$. 
As the $q_j$ are distinct from each other and from $t_1,\ldots,t_n$ we again deduce 
in the same way as above that the corresponding determinant is non-zero. 
Details are left to the reader. 

\subsection{General case}\label{subsec:thmgen}
Finally, consider the general case where one of the points $q_j$ of weight $w_j$ is 
equal to some of the $t_i$. Then the highest power of $(z-t_i)$ that divides $\psi$ is 
$w_j+1$ so instead of the $3w_j+2$ equations 
(\ref{condgi},\ref{condhi},\ref{condgj},\ref{condhj},\ref{condcj}) we obtain 
\begin{align*}
     G(t_i) & = \cdots = G^{(w_j-1)}(t_i) = 0, G^{(w_j)}(t_i) = -w_j!(\rho^i_1+\rho^i_2) \\
     H(t_i) & = \cdots = H^{(2w_j-1)}(t_i) = 0, H^{(2w_j)}(t_i) = (2w_j)!\rho^i_1\rho^i_2. 
\end{align*}
It follows that in the confluent Vandermonde matrix governing the system of equations 
the rows corresponding to the conditions 
(\ref{condgi},\ref{condhi},\ref{condgj},\ref{condhj},\ref{condcj}) must be replaced by rows corresponding 
to the first $w_j+1$ derivatives of $G$ at $t_i$ and the first $2w_j+1$ derivatives of $H$. 
As before, this leaves no freedom for the coefficients of $G$. 
Now instead of (\ref{condpj}) a natural set of additional parameters can be picked by prescribing 
$w_j$ further derivatives of $H$ at $t_i$ 
--- namely, $H^{(2w_j+1)}(t_i),\ldots,H^{(3w_j)}(t_i)$ ---, 
and the confluent Vandermonde matrix obtained this way 
still has nonvanishing determinant. This then finishes the proof of the genaral case of Theorem \ref{thm}. 

\section{The dimension of equations}\label{sec:cor}
Let us come to the case where the number $N$ of apparent singularities $Q$ is allowed to 
be arbitrary: $Q=\{q_1,\ldots,q_N\}$. Following the above proof it is then easy to check that the linear 
system governing the existence of a differential equation determines all coefficients $G_m$ uniquely and 
leads to a matrix similar to the case $n-2$ above on the $H_m$, except that the indices of the $q_j$ go 
from $1$ to $N$ and that the powers of $t_i$ range from $0$ to $2(n+N-1)$. 
Hence, the number of columns changes to $2n+2N-1$, whereas the number of rows becomes $n+3N+1$. 

Obviously, as already mentioned in the introduction, if $N<n-2$ then the corresponding linear system is 
still of maximal rank, because it is a subsystem of the system of the $n-2$ case. 
As the number of rows in this case is smaller than the number of columns, this shows that the 
system is underdetermined and in addition to $p_1,\ldots,p_N$, further $n-2-N$ of the parameters $H_m$ 
(say $H_{n+3N+1},\ldots,H_{2(n+N-1)}$) can be chosen arbitrarily. 
Thus, we obtain a space of parameters for the differential equations of dimension $n-2$. 

If, on the other hand, we have $N>n-2$, then the system is overdetermined. 
In what follows we will point out that by choosing the $p_j$ in a suitable way we can still ensure 
existence of the differential equation with singularities at $Q$. 
For this purpose, we first observe that just as in the rank $n-2$ case one can show that the first 
$2n+2N-1$ rows of the relevant system are still linearly independent. 
Next, notice that the number $N-n+2$ of remaining rows is always smaller than or equal to $N$. 
It follows that $N-n+2$ rows of the form 
\begin{equation}\label{lincombj}
	\begin{pmatrix}
	0 & 0 & 2 & 6 q_j & \cdots & (4n-6) (4n-7) q_j^{4n-8}
	\end{pmatrix},
\end{equation}
say for $j\in\{n-1,\ldots,N\}$, can be expressed as linear combinations of the first $2n+2N-1$ rows. 
Clearly, the coefficients of these linear combinations only depend on $P,Q$. 
Therefore, if the constants $\delta_j p_j^2 + \epsilon_j p_j$ on the right hand side of 
(\ref{condcj2}) for $j\in\{n-1,\ldots,N\}$ are linear combinations with the same coefficients 
of the constants on the right hand sides of the first $2n+2N-1$ equations, 
then there exists a solution. In order that a solution exist, the constants $p_1,\ldots, p_N$ must 
therefore fullfill a system of $N-n+2$ quadratic equations. These equations are clearly independent 
from each other, since the one coming from the linear combination of (\ref{lincombj}) for a fixed 
$j\in\{n-1,\ldots,N\}$ contains the quadratic term $\delta_j p_j^2$ (recall from (\ref{condcj2}) 
that $\delta_j\neq 0$), while the ones coming from the linear combination of the similar rows for 
$k\neq j$ do not contain such a quadratic term. 
We conclude that for the existence of a differential equation of the required positions $Q$ of apparent 
singularities and required momenta $p_1,\ldots, p_N$ the latter are constrained by $N-n+2$ independent 
conditions; whence a total number $n-2$ of parameters for the differential equations with apparent 
singularities at $Q$ in this case too. This concludes Corollary \ref{cor}. 

\section{Geometric interpretation}\label{sec:geomint}

In this section our aim is to show Corollary \ref{cor:geomint}.
Denote by 
$$
   \pi: X^{[n-2]} \to S^{n-2}(X)
$$
the Hilbert-Chow morphism from the Hilbert scheme of $n-2$ points on $X$ to the $(n-2)$-th 
symmetrix power of $X$. 
Recall that $\pi$ is an isomorphism away from the big diagonal $\Delta\subset S^{n-2}(X)$ 
(the complement of the set of $n-2$ distinct points). 
The map $q:X\to \P$ induces a map $q:S^{n-2}(X)\to S^{n-2}(\P)$ and hence a map 
$q\circ\pi:X^{[n-2]} \to S^{n-2}(\P)$. 
On the other hand, for each $N\leq n-2$ and $N$-tuple of distinct points 
$(q_1,p_1),\ldots,(q_N,p_N)\in X$ 
with multiplicities $w_1,\ldots,w_N$ summing up to $n-2$ let us set 
$$
   (\vec{q},\vec{p}) = \sum_{j=1}^N w_j (q_j,p_j) \in S^{n-2}(X).
$$
Then there exists an algebraic isomorphism 
$$
   \pi^{-1} (\vec{q},\vec{p}) \cong \pi^{-1}(w_1 (q_1,p_1)) \times \cdots \times \pi^{-1}(w_N (q_N,p_N)) 
$$
where the letters $\pi$ on the right hand side refer to the Hilbert-Chow
morphisms of $X^{[w_1]},\ldots,X^{[w_N]}$ respectively. The map (in the reverse
direction) is given by 
\begin{equation}\label{idealmap}
   (I_1,\ldots ,I_N)\mapsto I = I_1\cap\cdots\cap I_N.
\end{equation}
Furthermore, for every $j$ there exists an open subset $C(q_j,p_j)$ of 
$\pi^{-1}(w_j (q_j,p_j))$ of maximal dimension $w_j-1$ consisting of ideals of the form 
$$
    I_j = \langle (q-q_j)^{w_j}, (p-p_j) + P(q-q_j) \rangle
$$
where $P$ is any polynomial of degree $w_j-1$ with vanishing constant term (so that $I_j$ depends on 
the $w_j-1$ coefficients of $P$). Now define the subset $U$ of $X^{[n-2]}$ by 
$$
   U = \pi^{-1}(S^{n-2}(X)\setminus \Delta) \cup \bigcup_{(\vec{q},\vec{p})} 
   (C(q_1,p_1) \times \cdots \times C(q_N,p_N)). 
$$
In other words we let $U$ contain the open stratum $S^{(n-2)}_{1,\ldots,1} = S^{n-2}(X)\setminus\Delta$ 
of $X^{[n-2]}$ and of all other strata $S^{(n-2)}_{w_1,\ldots,w_N}$ the open subset of the irreducible 
component of maximal dimension described by the above equations, that is for all $(\vec{q},\vec{p})$ 
the image of $C(q_1,p_1)\times\cdots\times C(q_N,p_N)$ by the map (\ref{idealmap}).
Finally, let us consider the open subset $U^0$ of $U$ defined as 
$$
   U^0 = U \cap (q\circ\pi)^{-1} S^{n-2}(\C\setminus P).
$$
\begin{rk}
Of course, since we restrict to $U^0$, we could have just as well started out with the open surface 
$K_{\P\setminus P}$ (total space of the same line bundle on the open curve) instead of $X$; 
nonetheless we choose to work with $X$ instead as we believe that the generalisations of 
the results of the present paper to the case where some of the apparent singularities might 
agree with real singular points will involve the surface $X$. 
This is suggested by the role played by $X$ in the $2$-dimensional Painlev\'e VI case \cite{iis}, 
see also our Concluding Remarks \ref{subsec:conclusion}. 
\end{rk}

On the other hand, let $\mathcal{M}$ denote the moduli space of stable parabolic logarithmic integrable 
connections $(E,\nabla,\{l^i\}_{i\in\{0,\ldots,n\}})$ of rank $2$ on $\P$, with logarithmic singularities 
at the points of $P$ with eigenvalues of the residue at $t_i$ equal to $\rho^i_1,\rho^i_2$ and arbitrarily 
chosen parabolic weights, see \cite{iis}. Here $l^i\subset E|_{t_i}$ is the one-dimensional eigenspace 
of $\res_{t_i}(\nabla)$ corresponding to $\rho^i_1$.
Instead of explaining the meaning of stability here, let us simply notice that 
because of our assumption of genericity of the eigenvalues, all logarithmic 
connections with these eigenvalues of its residues at $P$ are simple hence 
automatically parabolically stable (there are no nontrivial sub-bundles at all that could 
possibly violate the inequality between the slopes).

Let us now define a rational map from $\Mod^0$ to $U^0$ as follows \cite{ss};
the construction generalizes that of \cite{dubma}. 
Consider the first basis vector $e_1$ of $E\cong \mathcal{O} \oplus\mathcal{O}(-1)$ 
as cyclic vector of $\nabla$ on a Zariski open subset of $\P$ and set 
$f_2 \frac{dz}{\psi}=\nabla(e_1)$. 
Let $F$ be the locally free subsheaf of $E$ generated by the local sections
$e_1,f_2$. Then the torsion sheaf $T=E/F$ is of total length $n-2$. 
It defines points $q_1,\ldots,q_N$ of multiplicities $w_1,\ldots,w_N$
satisfying $w_1 + \cdots + w_N = n-2$. Assume $q_j\notin P$ for every
$j\in\{1,\ldots,N\}$; denote by $\Mod^1$ the set of connections satisfying 
this latter property. Then for an element $(E,\nabla)$ of $\Mod^1$ the 
connection matrix with respect to $e_1,f_2$ may be written as 
\begin{equation}\label{localform}
   d - \begin{pmatrix}
         0 &  a_{1,2}(z)\\
         1 & a_{2,2}(z)
     \end{pmatrix} \frac{dz}{\psi}
\end{equation}
for some rational functions $a_{1,2},a_{2,2}$ having poles at the points $q_j$. 
\begin{lem}\label{lem:localform}
In a neighborhood of $q_j$ we have 
\begin{align*}
   a_{2,2}(z) & = \frac{w_j}{z-q_j} + \mbox{holomorphic terms},\\
   a_{1,2}(z) & = \frac{p_j^0}{z-q_j} + p_j^1 + \cdots + p_1^{w_1-1}
   (z-q_j)^{w_j-2} + \mbox{higher order terms}.
\end{align*}
\end{lem}
\begin{proof}
Since $\nabla:E\to E\otimes K_{\P}(P)$ has no singularity at $q_j$, a sequence
of $w_j$ upper elementary modifications of the connection form with respect to some directions
has to produce a matrix whose entries are regular functions at $q_j$. 
Recall that upper elementary modification at $q_j$ with respect to the direction $(p,1)^t$ is by definition 
the change of trivialization with respect to the matrix
$$
   \begin{pmatrix}
     1 & p\\
     0 & 1
     \end{pmatrix}
   \begin{pmatrix}
     1 & 0 \\
     0 & (z-q_j)^{-1}
     \end{pmatrix}
   \begin{pmatrix}
     1 & -p\\
     0 & 1
     \end{pmatrix}.
$$
On the diagonal entry $a_{2,2}$ it acts by adding $(z-q_j)^{-1}+O(1)$; this proves the first statement. 
The same argument also shows that the directions with respect to which the modifications at $q_j$ have to 
be applied are different from $e_1$; otherwise the elementary modification would introduce a term 
$(z-q_j)^{-1}$ in the $(1,1)$-entry of the matrix that no other elementary modification could cancel
(except for a lower one in the same direction). 
On the off-diagonal entry $a_{1,2}$ the elementary modification at $q_j$ with respect to the direction $(p,1)^t$ 
acts by
$$
   a_{1,2} \mapsto \frac{a_{1,2} - p (p+ a_{2,2})}{z-q_j} + O(1)
$$
whence the second statement. 
\end{proof}

With the notations of Lemma \ref{lem:localform} set now 
$$
   I_j = \langle (q-q_j)^{w_j}, (p-p_j^0) - p_j^1 (q-q_j) -\cdots - p_1^{w_1-1}
   (q-q_j)^{w_j-1}  \rangle.
$$
Then 
$$
   I = I_1 \cap \cdots \cap I_N
$$
defines a point in $U^0$. This gives us the map of Corollary \ref{cor:geomint}. 
The procedure associating $(F,a_{1,2},a_{2,2})$ to $(E,\nabla)$ is clearly algebraic 
in $(E,\nabla)$ as it depends on no choice (the cyclic vector $e_1$ being fixed). 
So the mapping defined above is algebraic wherever defined. 
We need to check that its domain $\Mod^1$ is open and its image is indeed equal to $U^0$. 
Openness follows from equality of the dimensions and the fact that the image contains $U^0$. 
So we need to identify the image. 

It follows from Theorem \ref{thm} that given the following data there exists 
a unique Fuchsian differential equation with exponents $\rho^i_1,\rho^i_2$ 
at $t_i$ and having apparent singularities at the points $q_1,\ldots,q_N$ with auxiliary 
parameters $p_j^k$: 
\begin{itemize}
\item In case $q_1,\ldots,q_N\in \P\setminus P$ the parameters are 
\begin{equation}\label{parameters}
   (q_1, p_1^0, \ldots ,p_1^{w_1-1}), \ldots, (q_N, p_N^0, \ldots ,p_N^{w_N-1})
\end{equation}
where $q_1,\ldots,q_N$ are the apparent singularities of weight $w_1,\ldots,w_N$ respectively 
(satisfying (\ref{sumw})), and $p_j^l = H^{q_j}_{l+1}$ is the $(l+1)$'th term in the Laurent expansion of 
$H$ near $q_j$, for $j\in\{1,\ldots,N\},l\in\{0,\ldots,w_j-1\}$. 
\item In the case where some $q_j = t_i\in P$ of weight $w_j$, corresponding 
fiber parameters are given by 
$$
   p_j^0 = H^{t_i}_{2w_j + 1}, \ldots, p_j^{w_j-1} = H^{t_i}_{3w_j}.
$$ 
\end{itemize}

Therefore, in order to show that the map as defined on the open subset is onto $U^0$ we need to check that the 
underlying vector bundle of the connection produced by Theorem \ref{thm} out of the data
(\ref{parameters}) is indeed of Birkhoff type $(0,-1)$. 

\subsection{Weight $1$ case}\label{wt1}

Assume first that in the above expressions $w_j=1$ for all $j\in\{1,\ldots,n-2\}$. 
Using the notation 
\begin{align}
	\psi_T(z) & = \prod_{j=1}^n(z-t_j)\label{psit}\\
	\psi_Q(z) & = \prod_{\alpha=1}^{n-2}(z-q_j)\label{psiq}, 
\end{align}
so that $\psi(z)=\psi_T(z)\psi_Q(z)$, consider the local frame on the open chart 
$\P\setminus \{t_0\}$ away from infinity given by 
\begin{align}
    w_1 & = w \notag \\
    w_2 & = \psi_T \frac{\d w_1}{\d z}. \label{extension} 
\end{align}
In this frame, equation (\ref{eq}) is equivalent to the connection $\d + A$ where the connection matrix
$A$ is the negative of a companion matrix 
\begin{equation}\label{a}
    A  = \frac{\d z}{\psi_T(z)} \begin{pmatrix} 
                                           0 &  H(z)/\psi_Q^2(z) \\
                                            - 1  & G(z)/\psi_Q(z)-\psi_T'(z)
                           \end{pmatrix}
\end{equation}
(notice that this is logarithmic at $q_j$ because of (\ref{condhj})). 
Observe also that intrinsically $H{\d z}/{\psi}$ has to be considered as a section of 
$K_{\P}(P)$ where $K_{\P}$ stands for the canonical bundle of $\P$; in particular, 
$H^{q_j}_1$ is naturally an element of $T_{q_j}^*\P$ (see also \cite{ss}). 
One can introduce a similar frame near $t_0$ with local coordinate $\zeta=z^{-1}$: 
\begin{align}
    \tilde{w}_1 & = w \notag \\
    \tilde{w}_2 & = \zeta \frac{\d w_1}{\d \zeta} \label{extinf} 
\end{align}
and check that equation (\ref{eq}) is equivalent to a connection with logarithmic singularity at $\zeta = 0$ 
and connection matrix in form similar to (\ref{a}) with respect to this frame too. 
Furthermore, as it is easy to see \cite{sz} the frames (\ref{extension},\ref{extinf}) 
match together to give a holomorphic bundle 
\begin{equation}\label{f-birkhoff}
        F \cong \mathcal{O} \oplus T_{\P}(-P),
\end{equation}
where $T_{\P}$ stands for the tangent bundle of $\P$; in particular, 
$F$ is of Birkhoff type $(0,1-n)$. 
At $t_i\in P$ the residue of $-A$ is the matrix 
$$
        \res_{t_i}(-A) = 
        \begin{pmatrix}
                0 &  -\rho^i_1\rho^i_2  \\
               1 & \rho^i_1 + \rho^i_2
        \end{pmatrix},
$$
whose eigenvalues are indeed given by $\rho^i_1,\rho^i_2$, with corresponding 
eigenspaces spanned by 
$$
       \begin{pmatrix}
                -\rho^i_2 \\ 1
       \end{pmatrix},
       \begin{pmatrix}
                -\rho^i_1 \\ 1
       \end{pmatrix}.
$$
At the points of $Q$ the eigenvalues of the residue are equal to $0,1$ (see (\ref{condgj})). 
The residue is given by 
\begin{equation}\label{resqi}
        \res_{q_j}(A) = 
        \begin{pmatrix}
                0 & -p_j \\
                0 & 1
        \end{pmatrix},
\end{equation}
whose $1$-eigenvector is $(-p_j,1)^T$. 
The extra conditions (\ref{condcj}) are equivalent to saying that the logarithmic 
singularities at the points of $Q$ can be removed using an upper elementary 
transformation in the direction of the $1$-eigenspace of the residue of $A$.
We have a total number of $n-2$ apparent singularities, 
and each elementary transformation is performed in the direction of the line spanned by 
$(-p_j,1)^T$ with respect to the frame dual to (\ref{extension}), hence it 
increases the second number in the type of $F$. 
It follows that after the elementary transformations at all $q_j$ we obtain a bundle 
$$
        E\cong \mathcal{O} \oplus \mathcal{O}(-1)
$$
which inherits a logarithmic connection $\nabla$ from $\d - A$ with logarithmic 
singularities only at $P$, with eigenvalues $\rho^i_1,\rho^i_2$ of its residue at $t_i$. 
In other words, the type of $E$ is $(0,-1)$. 
Notice also that $E$ admits a unique global section $(1,0)$ 
(up to a scalar multiple), and choosing this vector as cyclic vector gives back 
the apparent singularity set $Q$. 

\subsection{Higher weight case}

This is completely analogous to the previous case. According to Lemma \ref{lem:localform} 
the directions of the sucesssive elementary transforms one needs to apply to (\ref{localform}) 
in order to make its singularities at $q_1,\ldots,q_N$ disappear are all complementary 
to the line spanned by $e_1$. Just as in the weight $1$ case this implies that the underlying 
vector bundle $E$ is of the desired Birkhoff type $(0,-1)$. This finishes the proof of 
Theorem \ref{thm}. 

\section{Parabolic structure and apparent singularities}

In this section we apply our methods to rederive the following result of 
Loray, Saito and Simpson. Let $n=3$ (so $N=1$), fix the eigenvalues 
$\rho^i_1$ generically so that their sum is equal to $1$ and consider the map 
$$
   q:\Mod^0\to \P\setminus P
$$
introduced in Corollary \ref{cor:geomint}. On the other hand, there is a natural forgetful map 
$$
   Q:\Mod^0\to \P\setminus P,
$$
where $\P\setminus P$ is a Zariski open subset of the moduli space of parabolic bundles and
$p_2$ maps the equivalence class of the triple $(E,\nabla,\{l^i\}_{i\in\{0,\ldots,n\}})$ to 
the equivalence class of $(E,\{l^i\}_{i\in\{0,\ldots,n\}})$. 
\cite[Proposition 9.1]{lss} then states that the fibers of $q$ in $\Mod^0$ intersect the fibers 
of $Q$ transversely in a single point. 

Assume we are given two parabolic logarithmic connections $(E_1,\nabla_1),(E_2,\nabla_2)$ with 
the given eigenvalues $\rho^i_k$ ($i\in\{0,1,2,3\}, k\in\{1,2\}$) of the residue. 
The Birkhoff-type of $E_1$ and of $E_2$ is $(0,-1)$, so choosing $(1,0)^t$ as cyclic 
vector gives us $q$-coordinates $q_1$ for $E_1$ and $q_2$ for $E_2$, so that 
$(E_1,\nabla_1)$ reduces to companion-matrix form (\ref{eq}) (and the same holds for $(E_2,\nabla_2)$ too). 
In addition, it follows from Theorem \ref{thm} that the scalar equation corresponding to 
$(E_1,\nabla_1)$ is uniquely determined by the quantity (\ref{condpj}) (and the same for $(E_2,\nabla_2)$). 
Let $(q_1,p_1)$ and $(q_2,p_2)$ be the parameters so obtained of $(E_1,\nabla_1)$ and of $(E_2,\nabla_2)$ 
respectively. 

We know from Subection \ref{wt1} that the bundles $E_1,E_2$ can be obtained from 
$F=\mathcal{O}\oplus T_{\P}(-P)\cong\mathcal{O}\oplus\mathcal{O}(-2)$ (see \ref{f-birkhoff})
applying elementary transformations at the points $q_1,q_2$ with respect to the directions $p_1,p_2$
respectively. Let $\mathcal{E}lm_{q,-p}$ stand for elementary transformation of 
$F$ at the point $q\in\P$ in direction of the line spanned by $(-p,1)^t$ with respect to the 
standard trivialisation $(f_1,f_2)\in \mathcal{O}\oplus\mathcal{O}(-2)$ of $F$. 
Furthermore, it follows from \cite{sz2} that the parabolic structure on $F$ is determined by the 
eigenvalues $\rho^i_k$. Since these elementary modifications are applied away from the parabolic points, 
from this we deduce the equality of parabolic bundles 
\begin{equation}\label{flip}
   E_2 = \mathcal{E}lm_{q_2,-p_2} \circ \mathcal{E}lm_{q_1,-p_1}^{-1}(E_1).
\end{equation}
Positive and negative elementary modifications of holomorphic bundles are the simple building blocks 
of rational transformations of holomorphic bundles just as blow-ups and blow-downs are building blocks 
of birational maps between algebraic varieties. Therefore, in analogy with the language of birational 
geometry, we will name the transformation appearing on the right-hand side of (\ref{flip}) a \emph{flip}. 

The elementary transformation $\mathcal{E}lm_{q,-p}^{-1}$ is given by the following composition of changes of bases: 
\begin{align*}
   e_1' & = e_1, & e_2' = e_2-p e_1, \\
   e_1'' & = e_1' = e_1, & e_2'' = \frac{e_2'}{z-q} = \frac{e_2-p e_1}{z-q},
\end{align*}
hence in the chosen basis of $E$ we have the matrix forms
\begin{equation}\label{elmj}
   \mathcal{E}lm_{q_j,-p_j}^{-1} 
     = \begin{pmatrix}
       1 & -\frac{p_j}{z-q_j} \\
       0 & \frac{1}{z-q_j}
       \end{pmatrix}.
\end{equation}
It then follows that with respect to the same bases, 
\begin{equation}\label{flip-matrix}
    \mathcal{E}lm_{q_2,p_2} \circ \mathcal{E}lm_{q_1,p_1}^{-1} = 
       \begin{pmatrix}
              1 & \frac{p_2-p_1}{z-q_1}\\
              0 & \frac{z-q_2}{z-q_1}
         \end{pmatrix}
\end{equation}

Let us now fix $q_1,q_2\in\C\setminus P,p_1\in\C$ and look for $p_2\in\C$ 
such that $E_1$ and $E_2$ are parabolically isomorphic bundles. 
As $E_1\cong\mathcal{O}\oplus\mathcal{O}(-1)\cong E_2$, we have 
$$
   \mathcal{H}om(E_1,E_2) \cong \begin{pmatrix} 
                          \mathcal{O} & \mathcal{O}(-1) \\
                          \mathcal{O}(1) & \mathcal{O}
                       \end{pmatrix},
$$
so the space of global homomorphisms from $E_1$ to $E_2$ 
(not necessarily preserving the parabolic structure) is $4$-dimensional. 
However, the scalar homomorphisms act trivially on the parabolic structure. 
In conclusion, there is a $3$-dimensional space of global homomorphisms 
(not necessarily compatible with the parabolic structure). 
The question is whether it is possible to choose $p_2$ and 
$g\in H^0(\P,\mathcal{H}om(E_1,E_2))$ such that the 
action of (\ref{flip-matrix}) on the parabolic structure agree with that of $g$. 

Phrased differently we are then looking for $p_2,g$ such that for all $i\in\{0,1,2,3\}$ 
the line $l^i$ be an eigendirection of 
\begin{equation}\label{gflip}
   g(t_i)^{-1} \begin{pmatrix}
              1 & \frac{p_2-p_1}{t_i-q_1}\\
              0 & \frac{t_i-q_2}{t_i-q_1}
         \end{pmatrix}
\end{equation}
Let $l^i$ be the line spanned by $(1, u^i)^t$ (generically there exists a unique such $u^i$). Set 
\begin{equation}\label{g2}
   g^{-1}=\begin{pmatrix}
     a & 0 \\
      b + c z & 1
   \end{pmatrix},
\end{equation}
where the $(2,2)$-entry is normalized to $1$ using a constant gauge transformation (which does not 
modify the action on the parabolic structure). Then the linear map (\ref{gflip}) maps $l^i$ to 
\begin{equation}\label{gflipaction}
   \begin{pmatrix}
       a \left( 1 + u^i \frac{p_2-p_1}{t_i-q_1}\right) \\
       (b + c t_i) \left( 1 + u^i \frac{p_2-p_1}{t_i-q_1}\right) + u^i \frac{t_i-q_2}{t_i-q_1} 
   \end{pmatrix}
\end{equation}
Now this vector is a constant multiple of $(1, u^i)$ if and only if the following condition holds 
\begin{equation}\label{linearsystem}
   b + c t_i - a u^i = -u^i \frac{t_i-q_2}{t_i-q_1} \left( 1 + u^i \frac{p_2-p_1}{t_i-q_1}\right)^{-1}.
\end{equation}
The right hand side of this equation can be simplified into 
\begin{equation}\label{rhs}
   -u^i \frac{t_i-q_2}{t_i-q_1+u^i(p_2-p_1)}.
\end{equation}
These conditions for $i\in\{0,1,2,3\}$ represent $4$ quadratic equations in the unknowns $a,b,c,p_2$ of a very 
special type: namely, they are linear in $a,b,c$ and  $p_2$ only appears on the right hand side of the linear 
equations. The coefficient matrix of the linear system is 
$$
   \begin{pmatrix}
     -u_0 & 1 & t_0 \\
     -u_1 & 1 & t_1 \\
     -u_2 & 1 & t_2 \\
     -u_3 & 1 & t_3
   \end{pmatrix}
$$
which is of maximal rank $3$ for generic values of the parabolic parameters $u^i$. 
Let then $\alpha_1,\alpha_2,\alpha_3$ be coefficients of a linear combination of the second, third and 
fourth rows of this matrix yielding the first row: 
\begin{align}
  \alpha_1 u_1 + \alpha_2 u_2 + \alpha_3 u_3 & = u_0 \notag \\
  \alpha_1 + \alpha_2 + \alpha_3 & = 1 \label{linearcombination} \\
  \alpha_1 t_1 + \alpha_2 t_2 + \alpha_3 t_3 & = t_0.\notag
\end{align}
Then the system (\ref{linearsystem}) admits a solution if and only if the same linear combination 
of the expressions (\ref{rhs}) for $i\in\{1,2,3\}$ is equal to (\ref{rhs}) for $i=0$:
\begin{align*}
   \alpha_1 u^1 \frac{t_1-q_2}{t_1-q_1+u^1(p_2-p_1)} + &  
   \alpha_2 u^2 \frac{t_2-q_2}{t_2-q_1+u^2(p_2-p_1)} \\ 
   + \alpha_3 u^3 \frac{t_3-q_2}{t_3-q_1+u^3(p_2-p_1)} & =
    u^0 \frac{t_0-q_2}{t_0-q_1+u^0(p_2-p_1)}.
\end{align*}
After resolving the quotients the equation one obtains appears to be cubic in $p_2$. 
However, a computation shows that the coefficient of $p_2^3$ on the left hand side becomes 
$$
   -u^0u^1u^2u^3(\alpha_1 (t_1-q_1) + \alpha_2 (t_2-q_1) + \alpha_3 (t_3-q_1 ))
$$
which, using the equations (\ref{linearcombination}), is equal to 
$$
   -u^0u^1u^2u^3 (t_0-q_1).
$$
As this latter is equal to the coefficient of $p_2^3$ on the right hand side, 
it follows that the polynomial obtained after resolving the quotients is at most 
quadratic. 
A further direct (but tedious) computation shows that the polynomial is indeed quadratic 
for generic values of the parameters, therefore yielding $2$ solutions for $p_2,a,b,c$. 
Notice however that one of the solutions is due to the stacky structure of the space of 
points on $\P$ modulo M\"obius-transformations: indeed, for some values of the parameters 
$u^0,u^1,u^2,u^3$ there exists another configuration of points with the same cross-ratio, 
which however is not equivalent by M\"obius-transformations to $u^0,u^1,u^2,u^3$. 
For example, in case $q_2=q_1$ one of the solutions furnished by our method is the obvious solution 
$p_2=p_1,g=\mbox{Id}$, whereas the other one corresponds to the inequivalent quadruple with the same 
cross-ratio, corresponding to some value $p_2\neq p_1$ and $g\neq\mbox{Id}$. 
Said differently, although the action of $\mbox{PSl}_2(\C)$ is not transitive on quadruples of points on $\P$ 
but for most values of the cross-ratio it is simply transitive on quadruples sharing this given cross-ratio, 
its joint action (\ref{gflipaction}) with flips (\ref{flip}) is transitive but not simple.

\section{Concluding remarks}\label{subsec:conclusion}
Let us end this study by giving some insight into the case where some of the apparent singularities $q_j$ 
agree with some $t_i$; the situation is similar to the case of Painlev\'e VI, see \cite{iis}. 
We restrict here to the case where $w_j=1$. 
In this case the above interpretation of $p_j$ as the negative of the first coordinate 
of the $1$-eigendirection $(-p_j,1)^T$ of the residue of $-A^T$ 
(with respect to which elementary transformation has to be performed), breaks down. 
Indeed, in general the connection obtained by a negative elementary transformation 
applied to $\d - A$ will be logarithmic only if the direction with respect to which 
it is performed is one of the eigendirections $(-\rho^i_2, 1)^T$ or $(-\rho^i_1, 1)^T$ 
of $\res_{t_i}(- A)$. 
In different terms, the direction of the elementary transformation is prescribed; 
this is the geometric meaning of equations (\ref{condgi},\ref{condhi}). 
Instead of $H^{q_j}_1$, the free parameter this time is $H^{t_i}_3$ (see Subsection 
\ref{subsec:thmgen}). 
As $G$ is entirely determined, $H^{t_i}_3$ governs the first order behaviour 
of the exponents near $t_i$, that is the roots of 
$$
        \rho(\rho -1) + (G^{t_i}_0 + G^{t_i}_1(z-t_i)+O((z-t_i)^2)) \rho 
                  + (H^{t_i}_2 + H^{t_i}_3(z-t_i)+O((z-t_i)^2)) 
$$
are equal to 
$$
        \rho^i_{1,2}(z) = \rho^i_{1,2} + (H^{t_i}_3 + d^i_{1,2})(z-t_i) + O((z-t_i)^2),
$$
where $d^i_{1,2}\in\C$ only depends on $P,Q$ and the exponents. 
As the residue of the connection form $-A$ at $t_i$ is a companion matrix, 
it follows just as above that the corresponding eigendirections (with respect to which 
elementary transformation has to be carried out) are of the form 
$$
       \begin{pmatrix}
                -\rho^i_2(z) \\ 1
       \end{pmatrix},
       \begin{pmatrix}
                -\rho^i_1(z) \\ 1
       \end{pmatrix}.
$$
Said differently, instead of the value $p_j$ of the eigendirection, 
the free parameter in the case $q_j=t_i$ is its \emph{first-order behaviour}. 
In geometric terms, this amounts precisely to considering the blow-up 
$$
        \sigma: \widetilde{K_{\P}(P)} \to K_{\P}(P)
$$
of the total space of $K_{\P}(P)$ at the points $\rho^i_1,\rho^i_2\in T^*_{t_i}\P$. 
Let us define $F_{t_i}$ to be the proper transform of $T^*_{t_i}\P$ by $\sigma$ and set 
$$
        X = \widetilde{K_{\P}(P)} \setminus \cup_{i=0}^n F_{t_i}. 
$$
This gives evidence that the free parameters $(q_j,p_j)$ take value in $X$, and then by an 
argument similar to the one in the previous cases one should be able to describe the moduli 
space in terms of $X^{[n-2]}$. 
However, for a more detailed understanding of this description one would need to study thoroughly 
the effect of elementary modifications of logarithmic connections at real singular points; 
we leave this question for further study. 

\bibliographystyle{alpha}
\bibliography{independence}

\begin{thebibliography}{IKSY91}

\bibitem[DM07]{dubma}
Boris Dubrovin and Marta Mazzocco.
\newblock Canonical structure and symmetries of the {S}chlesinger equations.
\newblock {\em Communications in Mathematical Physics}, 271(2):289--373, 2007.

\bibitem[IIS06]{iis}
Michi-Aki Inaba, Katsunori Iwasaki, and Masa-Hiko Saito.
\newblock Moduli of stable parabolic connections, {R}iemann-{H}ilbert
  correspondence and geometry of {P}ainlev\'e equation of type {VI}. {I}.
\newblock {\em Publications of the Research Institute for Mathematical
  Sciences}, 42(4):987--1089, 2006.

\bibitem[IKSY91]{iksy}
Katsunori Iwasaki, Hironobu Kimura, Shun Shimomura, and Masaaki Yoshida.
\newblock {\em From {G}auss to {P}ainlev\'e}, volume E16 of {\em Aspects of
  Mathematics}.
\newblock Friedr. Vieweg \& Sohn, 1991.

\bibitem[LS12]{ls}
Frank Loray and Masa-Hiko Saito.
\newblock On the moduli space of rank two connections on the projective line
  minus five points.
\newblock In preparation, 2012.

\bibitem[LSS12]{lss}
Frank Loray, Masa-Hiko Saito, and Carlos Simpson.
\newblock Foliations on the moduli space of rank two connections on the
  projective line minus four points.
\newblock arXiv:1012.3612, 2012.

\bibitem[Nak99]{nak}
Hiraku Nakajima.
\newblock {\em Lectures on {H}ilbert {S}chemes of {P}oints on {S}urfaces},
  volume~18 of {\em University Lecture Series}.
\newblock American {M}athematial {S}ociety, 1999.

\bibitem[SS]{ss}
Masa-Hiko Saito and Szil\'ard Szab\'o.
\newblock Apparent singularities and canonical coordinates for moduli of
  connections.
\newblock In preparation.

\bibitem[Sza08]{sz}
Szil\'ard Szab\'o.
\newblock The extension of a {F}uchsian equation onto the projective line.
\newblock {\em Acta Scientiarum Mathematicarum}, 74:557--564, 2008.

\bibitem[Sza12]{sz2}
Szil\'ard Szab\'o.
\newblock Deformation theory of {Fuchsian} equations and logarithmic
  connections.
\newblock {\em Studia Sci. Math. Hungar.}, 49:466--481, 2012.

\bibitem[vdPS03]{vdps}
Marius van~der Put and Michael~F. Singer.
\newblock {\em Galois Theory of Linear Differential Equations}, volume 328 of
  {\em Grundlehren der mathematischen Wissenschaften}.
\newblock Springer-Verlag, 2003.

\end{thebibliography}

\end{document}